\newtheorem{teo}{Theorem}[section]
\newtheorem{lem}[teo]{Lemma}
\newtheorem{dfn}[teo]{Definition}
\newtheorem{ques}[teo]{Question}
\newtheorem*{tea}{Theorem}
 \newtheoremstyle{drem}
      {3pt}
      {3pt}
      {\rmfamily}
      {}
      {\bf}
      {:}
      {.5em}
      {}
 \theoremstyle{drem}
\newtheorem{exa}[teo]{Example}
\newcommand{\eg}[0]{\emph{e.g.} }
\newcommand{\ie}[0]{\emph{i.e.} }
\newcommand{\ssl}[1]{\underline{#1}}
\newcommand{\srl}[1]{\overline{#1}}
\newcommand{\pgen}[1]{\langle #1 \rangle}
\newcommand{\imp}[0]{\Rightarrow}
\newcommand{\eps}[0]{\varepsilon}
\newcommand{\kk}[0]{\ensuremath{\mathbb{K}}}
\newcommand{\rr}[0]{\ensuremath{\mathbb{R}}}
\newcommand{\zz}[0]{\ensuremath{\mathbb{Z}}}
\newcommand{\nn}[0]{\ensuremath{\mathbb{N}}}
\newcommand{\dd}[0]{\ensuremath{\mathrm{d}\!}}
\newcommand{\Id}[0]{\mathrm{Id}}
\newcommand{\un}[0]{1 \!\! \mathrm{l}} 
\newcommand{\limm}[1]{\textrm{\raisebox{.5ex}{\mbox{$\underset{#1}{\lim}$}}} \:}
\newcommand{\inff}[1]{\textrm{\raisebox{.5ex}{\mbox{$\underset{#1}{\inf}$}}} \:}
\newcommand{\tosp}[1]{\overset{#1}{\to}}
\newcommand{\cay}[0]{\mathrm{Cay}\!}
\newcommand{\diam}[0]{\mathrm{diam}\!}
\newcommand{\trc}[0]{\mathrm{T}\!\mathrm{C}}
\newcommand{\FP}[0]{\mathrm{F}\!\mathrm{P}\!}
\newcommand{\BD}[0]{\mathsf{B}\!\mathsf{D}}
\newcommand{\D}[0]{\mathsf{D}}
\newcommand{\LL}[0]{\mathsf{L}}
\newcommand{\maza}[0]{\mu}
\newcommand{\Gver}[0]{X}
\newcommand{\Grou}[0]{G}
\newcommand{\tpoiss}[0]{\cite[Theorem 1.1]{moi-poiss}}
\begin{document}
\centerline{\Large Vanishing of $\ell^p$-cohomology and transportation cost\footnote{MSC: Primary 20J06; Secondary: 20E22, 31C05, 43A07, 43A15}}

\vspace*{1cm}

\centerline{\large Antoine Gournay\footnote{Université de Neuchâtel, Rue É.-Argand 11, 2000 Neuchâtel, Suisse.}} 

\vspace*{1cm}

\centerline{\textsc{Abstract}}

\begin{center}
\parbox{10cm}{{ \small 
\hspace*{.1ex} 
In this paper, it is shown that the reduced $\ell^p$-cohomology is trivial for a class of finitely generated amenable groups called transport amenable. These groups are those for which there exist a sequence of measures $\xi_n$ converging to a left-invariant mean and such that the transport cost between $\xi_n$ displaced by multiplication on the right by a fixed element and $\xi_n$ is bounded (uniformly in $n$). This class contains groups with controlled F{\o}lner sequence (such as polycylic groups) as well as some wreath products (such as $H' \wr H$ where $H$ is finitely generated Abelian and $H'$ is finitely generated amenable).
}}
\end{center}

\section{Introduction}\label{s-intro}

\setcounter{teo}{0}
\renewcommand{\theteo}{\thesection.\arabic{teo}}



The study of $\ell^p$-cohomology for discrete spaces has been introduced by Gromov in \cite[\S{}8]{Gro} as an asymptotic invariant of groups (it turns out it is a quasi-isometry invariant, see below). 
The subject matter of this paper is the reduced $\ell^p$-cohomology in degree one of finitely generated groups. Let $\Gamma= (\Gver, E)$ be a Cayley graph (for a finite generating set), the reduced $\ell^p$-cohomology in degree one is the quotient 
\[
\ssl{\ell^p H}^1(\Gamma) := ( \ell^p(E) \cap \nabla \kk^\Gver ) / \srl{\nabla \ell^p(\Gver)}^{\ell^p(E)},
\]
where the gradient of a function on the vertices $g$ is defined by $\nabla g(\gamma,\gamma') = g(\gamma') - g(\gamma)$. 

When $G$ is a finitely generated group, this is isomorphic to the reduced cohomology of the left-regular representation on $\ell^p(G)$, see Puls' paper \cite{Puls-harm} or Martin \& Valette \cite{MV}. Another important result is that $\ell^p$-cohomology is an invariant of quasi-isometry:
\begin{tea}\emph{(see Élek \cite[\S{}3]{El-qi} or Pansu \cite{Pan-qi})}
If two graphs of bounded valency $\Gamma$ and $\Gamma'$ are quasi-isometric, then they have the same $\ell^p$-cohomology (in all degrees, reduced or not).
\end{tea}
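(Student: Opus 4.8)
This is classical, and I would follow the Rips-complex argument of \cite{El-qi} (Pansu's proof in \cite{Pan-qi} is close in spirit but phrased through de Rham-type complexes). The plan is to reduce $\ell^p$-cohomology to the simplicial $\ell^p$-cohomology of Rips complexes, show this is independent of the scale, and then transport a quasi-isometry between scales. For a graph $\Gamma$ of bounded valency and an integer $d \geq 1$, let $P_d(\Gamma)$ be the Rips complex at scale $d$, whose $k$-simplices are the $(k{+}1)$-element sets of vertices of diameter $\leq d$. Since $\Gamma$ has bounded geometry, the cochain space $\ell^p C^k(P_d(\Gamma))$ carries a norm well defined up to equivalence (a given face lies in boundedly many simplices), and $P_1(\Gamma)$ has the same $1$-skeleton as $\Gamma$, so $\ssl{\ell^p H}^*(\Gamma)$ is computed by $P_1(\Gamma)$. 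The first step is to prove that for $d \leq d'$ the inclusion $\iota \colon P_d(\Gamma) \inj P_{d'}(\Gamma)$ induces an isomorphism on $\ell^p$-cohomology in every degree, reduced and unreduced. I would do this with an explicit chain homotopy: fix a proper linear order on the vertices and contract each simplex onto its least vertex, performing the contraction by iterated coning inside balls of radius $O(d')$; the resulting homotopy operator has a matrix supported within bounded distance of the diagonal and of bounded multiplicity, so — by the estimate below — it is a bounded operator on $\ell^p$-cochains, and the chain-homotopy identity descends to $\ell^p$-cohomology and, by continuity, to the reduced cohomology.

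Next I would transport a quasi-isometry. Let $f \colon \Gamma \to \Gamma'$ be an $(L,C)$-quasi-isometry realized as a map of vertex sets, with quasi-inverse $\bar f \colon \Gamma' \to \Gamma$ of the same type. Since $f$ carries vertex sets of diameter $\leq d$ to sets of diameter $\leq Ld + C =: d'$, it extends affinely to a simplicial map $f_* \colon P_d(\Gamma) \to P_{d'}(\Gamma')$; likewise $\bar f_* \colon P_{d'}(\Gamma') \to P_{d''}(\Gamma)$. As $\bar f \circ f$ stays within a bounded distance of the identity on vertices, one may take $d''$ large enough that $\bar f_* \circ f_*$ and the inclusion $\iota \colon P_d(\Gamma) \inj P_{d''}(\Gamma)$ are contiguous; the associated prism operator, supported within distance $O(d'')$ of the diagonal, is a bounded chain homotopy between $(\bar f \circ f)^* = f^* \circ \bar f^*$ and $\iota^*$. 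Hence on $\ell^p$-cohomology $f^* \circ \bar f^*$ equals $\iota^*$, an isomorphism by the first step, and the symmetric computation shows $\bar f^* \circ f^*$ is an isomorphism too. All the Rips-complex cohomologies occurring here are canonically $\ssl{\ell^p H}^*(\Gamma)$ or $\ssl{\ell^p H}^*(\Gamma')$, so the two composites being isomorphisms forces $f^* \colon \ssl{\ell^p H}^*(\Gamma') \to \ssl{\ell^p H}^*(\Gamma)$ to be an isomorphism, in every degree, reduced or not.

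The whole scheme rests on one estimate, which I expect to be the main obstacle: \emph{if $\Phi$ is a simplicial map — or a prism operator — between complexes of bounded geometry such that every target simplex has uniformly bounded preimage and every source simplex is carried into a uniformly bounded region, then $\Phi^* \colon \ell^p C^k \to \ell^p C^k$ is bounded, with norm depending only on $k$ and the geometric constants.} I would prove this by expanding $(\Phi^* \omega)(\sigma)$ as a sum of boundedly many terms $\pm\,\omega(\tau)$, noting that each $\tau$ is reused in only boundedly many such sums, and applying H\"older's inequality with the counting bound; continuity for the $\ell^p$-topology then carries the argument over to the reduced cohomology $\ker \dd_k \big/ \srl{\img \dd_{k-1}}$ upon quotienting by closures. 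The delicate part is this lemma together with the verification, in the first step, that the scale-change contractions can genuinely be organized with bounded propagation and bounded multiplicity; this is exactly where bounded valency is used, and without a uniform geometric bound the operators above need not be bounded and the statement itself fails. Granting these points, the remainder is formal homological algebra in the category of $\ell^p$-cochain complexes with bounded operators.
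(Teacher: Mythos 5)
The paper does not actually prove this theorem: it is quoted from {\'E}lek \cite{El-qi} and Pansu \cite{Pan-qi}, and only the degree-one reduced case is ever used in the sequel. Your sketch is essentially {\'E}lek's Rips-complex argument from the cited reference --- scale independence via bounded-propagation filling homotopies, transport of the quasi-isometry through contiguity and a prism operator, and the Schur-type estimate showing that bounded-propagation, bounded-multiplicity operators are $\ell^p$-bounded --- so it matches the proof the paper points to, and the steps you yourself flag as delicate (organizing the conings with bounded propagation and multiplicity) are precisely the technical content of \cite[\S{}3]{El-qi}. Note that for the degree-one statement actually needed in this paper there are much shorter arguments avoiding Rips complexes altogether (Puls \cite[Lemma 6.1]{Puls-pharmbnd}, Bourdon--Pajot \cite{BP}), which the introduction also cites.
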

The result is actually much more powerful, in the sense that it holds for a large category of measure metric spaces (see above mentioned references). For shorter proofs in more specific situations see Puls \cite[Lemma 6.1]{Puls-pharmbnd} or Bourdon \& Pajot \cite[Théorème 1.1]{BP}. As a consequence, if $G$ is a finitely generated group, the $\ell^p$-cohomology of any two Cayley graphs (for a finite generating set) are isomorphic. Thus, one may speak of the $\ell^p$-cohomology of a group without making reference to a Cayley graph.

The present paper gives a partial answers to a question (dating back at least to Gromov \cite[\S{}8.$A_1$.($A_2$), p.226]{Gro}):
\begin{ques}\label{laquestion}
Let $G$ be an amenable group, is it true that 
for all $1<p<\infty$, $\ssl{\ell^pH}^1(G)=\{0\}$?
\end{ques}
For $p=2$, the positive answer is a famous result of Cheeger \& Gromov \cite{CG} (see also Lück's book \cite{Luck}). The results presented here will cover other partial (positive) answers due to Kappos \cite{Kap}, Martin \& Valette \cite{MV} and Tessera \cite{Tes}.

The problem is reduced to a question of transport cost for measures tending to a left-invariant mean in $G$. 

\begin{dfn}
A finitely generated group $G$ is called transport amenable if there exists $S$ a finite generating set, a positive constant $K \in \rr$ and a sequence of finitely supported probability measure $\xi_n$ converging to a left-invariant mean such that, $\forall s \in S$ and $\forall \alpha$, the transport cost
\[
\trc(\rho_s \xi_n^\vee, \xi_n^\vee) = \inff{m \in M(\rho_s \xi_n^\vee, \xi_n^\vee)}  \int_{\Grou \times \Grou} d_\Gamma(x,y) \dd m(x,y) \leq K,
\]
where $d_\Gamma$ is the distance in the Cayley graph $\Gamma = \cay(G,S)$, $\xi^\vee(x) = \xi(x^{-1})$ (so that $\rho_s \xi^\vee = (\lambda_s \xi)^\vee$),  and $M(\xi,\phi)$ is the collection of finitely supported measures on $\Grou \times \Grou$ with marginals $\xi$ and $\phi$. 
\end{dfn}
\begin{teo}\label{tvantran-t}
Let $G$ be a finitely generated group. If $G$ is transport amenable then the degree one reduced $\ell^p$-cohomology of $G$ vanishes for all $p \in ]1,\infty[$. 
\end{teo}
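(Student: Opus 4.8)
The plan is to prove directly that $\ssl{\ell^p H}^1(\Gamma)=\{0\}$ for one (hence, by the quoted quasi-isometry invariance, every) Cayley graph $\Gamma=(X,E)$ of $G$; equivalently, to show that whenever $f\in\kk^X$ has $\nabla f\in\ell^p(E)$ one can produce $g_n\in\ell^p(X)$ with $\nabla g_n\to\nabla f$ in $\ell^p(E)$. Fix such an $f$ and take $S$, $K$ and $(\xi_n)$ as in the definition of transport amenability. First a harmless normalisation: passing to finite convex combinations of the $\xi_n$ (Day--Namioka) I may assume $\|\lambda_s\xi_n-\xi_n\|_1\to 0$ for each $s\in S$, and since the transport cost is jointly convex and commutes with $\xi\mapsto\xi^\vee$ and with $\rho_s$, the bound $\trc(\rho_s\xi_n^\vee,\xi_n^\vee)\le K$ is untouched.

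The approximants I would use are $g_n:=f-f\ast\xi_n^\vee$, taken so that $g_n(x)=\sum_h\xi_n(h)\bigl(f(x)-f(xh)\bigr)$. Because $|f(x)-f(xh)|$ is at most the $\ell^p$-mass of $\nabla f$ along a geodesic from $x$ to $xh$ times $|h|_S^{1-1/p}$, and because a fixed edge lies on at most $|h|_S$ of the translated geodesics joining $x$ to $xh$ as $x$ varies, one gets $\|g_n\|_p^p\le\|\nabla f\|_p^p\sum_h\xi_n(h)|h|_S^p<\infty$ — finite since $\xi_n$ is finitely supported — so $g_n\in\ell^p(X)$. Moreover $\nabla g_n-\nabla f=-\nabla(f\ast\xi_n^\vee)$, and evaluating on the edge $(x,xs)$ one computes $\nabla(f\ast\xi_n^\vee)(x,xs)=\sum_h(\lambda_s\xi_n-\xi_n)(h)\,f(xh)$, i.e. the right-convolution of $f$ with the mean-zero measure $\lambda_s\xi_n-\xi_n$.

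The heart is $\|\nabla(f\ast\xi_n^\vee)\|_{\ell^p(E)}\to 0$. For each $s$ I would pick an almost optimal plan $m_{s,n}$ with $\sum_{a,b}m_{s,n}(a,b)\,d_\Gamma(a,b)\le K+o(1)$; transporting it back through $\vee$ writes $\lambda_s\xi_n-\xi_n$ as $\sum_{a,b}m_{s,n}(a,b)(\delta_a-\delta_b)$ with the same weighted total length — this is exactly the purpose of the $\xi^\vee$ and $\rho_s$ in the definition, since it makes the displaced endpoints stay at graph distance $d_\Gamma(a,b)$, so that $f(xa)-f(xb)$ telescopes into a sum of $d_\Gamma(a,b)$ increments of $\nabla f$ taken on translates of one fixed geodesic (translates by graph automorphisms, hence $\ell^p$-isometric). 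Plugging this in exhibits $\nabla(f\ast\xi_n^\vee)$ as an average of translates of the fixed vector $\nabla f\in\ell^p(E)$ whose total weight is $\le C(|S|,K)$; in particular $\|\nabla(f\ast\xi_n^\vee)\|_p\le C(|S|,K)\,\|\nabla f\|_p$, the easy \emph{bounded} estimate. Decay needs more, and here $1<p<\infty$ is indispensable: although bounded, this average is \emph{dispersing} — the translates wander off because $\xi_n$ tends to a left-invariant mean — and a mean-ergodic/reflexivity argument forces a bounded, dispersing average of a fixed $\ell^p$-vector to tend to $0$ in norm, the transport plan being exactly what organises the dispersion into honest sums of isometric translates. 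That $p>1$ is required is already visible on $\zz$, where Ces\`aro averages of a nonnegative $\ell^1$-sequence do not decay.

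Granting this, $\nabla g_n\to\nabla f$ with $g_n\in\ell^p(X)$, hence $\nabla f\in\srl{\nabla\ell^p(X)}^{\ell^p(E)}$; as $f$ was arbitrary, $\ssl{\ell^p H}^1(\Gamma)=\{0\}$ for all $p\in\,]1,\infty[$. The step I expect to be the main obstacle is precisely the upgrade from \emph{bounded} to \emph{decaying} for $\nabla(f\ast\xi_n^\vee)$: the finite transport cost alone only yields a uniformly bounded combination of translates of $\nabla f$, and one must genuinely combine the spreading of the $\xi_n$ with the reflexivity of $\ell^p$ — a quantitative mean-ergodic statement — to push the norm to $0$, with the transport plans supplying the geodesic bookkeeping. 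A subsidiary point of care is aligning conventions (the side of the Cayley graph, the $\vee$, the action $\rho_s$) so that every translate occurring in the error term is a genuine isometry of $\ell^p(E)$ — the definition of transport amenable is worded exactly so that this goes through.
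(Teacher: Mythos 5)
Your setup is sound, and your ``easy bounded estimate'' is correct: the triangle inequality over the transport plan does give $\|\nabla(\xi_n\text{-smoothing of }f)\|_{\ell^p}\le C(|S|,K)\,\|\nabla f\|_{\ell^p}$ (the paper reaches the same uniform bound slightly less directly, by proving it for $p=1$ and $p=\infty$ and interpolating via Riesz--Thorin). But the step you yourself flag as the main obstacle is a genuine gap, and the principle you invoke to close it is false as stated. A combination of $\ell^p$-isometric translates of a fixed vector whose \emph{total} weight is merely bounded by $C(|S|,K)$ --- not tending to $0$ --- need not tend to $0$ in norm, however much the translates ``disperse'': a single translate $\rho_{g_n}(\nabla f)$ with $g_n\to\infty$ is bounded, dispersing, and of constant norm. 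Nothing in transport amenability makes the individual weights of the plans spread out; only the total cost is controlled, so no mean-ergodic or reflexivity argument applies, and one should not expect $\|\nabla(f\ast\xi_n^\vee)\|_{\ell^p}\to 0$ for the sequence itself.

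The paper closes this gap with two moves absent from your proposal. First, by the Holopainen--Soardi lemma (combined with the Puls/Martin--Valette equivalence with $p$-harmonic functions) it suffices to treat $f\in\ell^\infty(\Grou)\cap\D^p(\Gamma)$; for such $f$ the smoothed gradient $(\delta_s-\delta_e)\ast\xi_n\ast f$ tends to $0$ \emph{pointwise}, being dominated by $\|\lambda_s\xi_n-\xi_n\|_{\ell^1}\|f\|_{\ell^\infty}$. For unbounded $f\in\D^p$ even this pointwise decay can fail, so the reduction is not cosmetic. Second, pointwise convergence plus the uniform $\ell^p$ bound yields only \emph{weak} convergence to $0$ (here $1<p<\infty$ enters), and Mazur's lemma then produces \emph{convex combinations} of the smoothed functions whose gradients converge in norm; since each smoothing, hence each convex combination, still represents $[f]$, the class dies. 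In short, the conclusion is reached not by upgrading your bounded sequence to a norm-null one, but by replacing it with convex combinations --- and the restriction to bounded $f$ is what makes even the weak convergence available. (A lesser point: with the paper's left-Cayley-graph convention the correct smoothing is $\xi_n\ast f(\eta)=\sum_\gamma\xi_n(\gamma)f(\gamma^{-1}\eta)$, for which the displacement has graph length $|\gamma|_S$; your difference $f(x)-f(xh)$ would have to be telescoped along a geodesic of length $|xhx^{-1}|_S$, which is unbounded in $x$, so the convention really must be fixed before the geodesic bookkeeping is done.)
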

The proof relies almost solely on classical functional analysis; the exception being a lemma from Holopainen \& Soardi \cite[Lemma4.4]{HS}.

This theorem extends a result of Tessera \cite[Theorem 2.2]{Tes} (when the $L^p$-representation is the left-regular representation, see theorem \ref{tconfol-t} in the present text) concerning groups with controlled F{\o}lner sequence (henceforth abbreviated by CF). Tessera's result implies these have trivial reduced degree one $\ell^p$-cohomology. The simplest example of a F{\o}lner sequence which is not CF is a sub-sequence of balls in a group of intermediate growth.

The exact range (in the class of amenable groups) of Tessera's result (and, consequently, of theorem \ref{tvantran-t}) is unclear, though Tessera's article \cite{Tes-iso} gives a temptingly complete description of such groups. This class contains groups of polynomial growth, polycyclic groups and wreath products $F \wr \zz$ where $F$ is a finite group.

However, theorem \ref{tvantran-t} applies to groups where it is known that \cite[Theorem 2.2]{Tes} does not apply. To see this, one must invoke the results of Erschler \cite{Ers} (or look again in \cite{Tes-iso}). Indeed, groups with CF have an isoperimetric profile in $\log v$, so that $\zz \wr \zz$ and $\zz_2 \wr \zz^2$ may not have CF. 

In examples \ref{ex-reverb} and \ref{ex-reverb2}, a small class of groups is shown to be transport amenable (though they are not CF, as explained in the previous paragraph). These groups are wreath products $H' \wr H$ where $H'$ is amenable (finite or infinite) and $H$ is infinite Abelian. In particular, this means that theorem \ref{tvantran-t} applies to $\zz \wr \zz$ and $\zz_2 \wr \zz^2$.

It was known that groups with infinitely many finite conjugacy classes (\eg an infinite centre, \eg groups of polynomial growth\footnote{Groups of polynomial growth are nilpotent by Gromov's famous result \cite{Gro-polyn}, and nilpotent groups possess infinitely many finite conjugacy classes.}) have trivial reduced $\ell^p$-cohomology in degree $1$ (see Kappos' preprint \cite[Theorem 6.4]{Kap}; the case of infinite centre is explained in Gromov \cite[\S{}8]{Gro} and detailed in Puls \cite[\S{}5]{Puls-prem} and Martin \& Valette's paper \cite[Theorem.(iii)]{MV}). Note that the same method used for theorem \ref{tvantran-t} may be applied to recover this vanishing result, see theorem \ref{tcentrinf-t}).

In \cite{moi-poiss} the author developed another method to show (among other results) that groups with trivial Poisson boundary (for a SRW on the Cayley graph, \ie Liouville) and superpolynomial growth have trivial reduced $\ell^p$-cohomology (see \tpoiss). Though all groups with CF have trivial Poisson boundary, groups like $\zz \wr \zz$ and $\zz_2 \wr \zz^3$ have non-trivial Poisson boundary\footnote{Wreath products $H' \wr H$ are also never finitely presented, unless $H' = \{1\}$ or $H$ is finite, see \cite{Baumslag-wrpasfinpres}.} the (see Kaimanovich \cite[Theorem 3.3]{Kaimano}). Thus \ref{tvantran-t} is not covered by \tpoiss.

The methods presented here may be of use in non-amenable groups, but apart for groups with infinitely many finite conjugacy classes, the author could not find any other case. They also apply to graphs, but the proper conditions on graphs are not so convenient to formulate (\eg quasi-transitive action by quasi-isometries).

When $1<p<\infty$, it is known (see Puls \cite[\S{}3]{Puls-harm} or Martin \& Valette \cite[\S{}3]{MV}) that the existence of non-constant $p$-harmonic function (\ie $h \in \D^p(\Gamma)$ with $\nabla^* \maza_{p,p'} \nabla h = 0$, where $\maza_{p,p'}$ is the Mazur map defined by $(\maza_{p,p'} f)(\gamma) = |f(\gamma)|^{p-2} f(\gamma)$) is equivalent to the non-vanishing of reduced cohomology in degree $1$. Other known consequences of the triviality of the reduced $\ell^p$-cohomology include the triviality of the $p$-capacity between finite sets and $\infty$ (see \cite{Yam} and \cite{Puls-pharmbnd}).

{\it Acknowledgments:}
The author is grateful to M.~de~la~Salle, P.~Pansu, J.~C.~Sikorav and R.~Tessera for comments and corrections.

\section{Notations and preliminary definitions}

Given a finitely generated group $G$ and a finite set $S$, the Cayley graph $\cay(G,S)$ is the graph whose vertices are the element of $G$ and $(\gamma,\gamma') \in E$ if $\exists s \in S$ such that $s^{-1} \gamma = \gamma'$. (Consequently, all Cayley graphs will be assumed to be of bounded valency.) In order for the resulting graph to have a symmetric edge set, $S$ is always going to be symmetric (\ie $s \in S \imp s^{-1} \in S$). Also, Cayley graphs are always going to be asumed connected (\ie $S$ is generating). 

The set of edges is thus thought of as a subset of $\Gver \times \Gver$. 
Functions will take value in $\kk$, a complete normed field where classical theorems about Banach spaces hold, \eg $\rr$ or $\mathbb{C}$. 
Thus $\ell^p(G)$ is the Banach space of functions on the vertices which are $p$-summable, while $\ell^p(E)$ will be the subspace of 
functions on the edges which are $p$-summable. 

The gradient $\nabla:\kk^\Gver \to \kk^E$ is defined by $\nabla g(\gamma,\gamma') = g(\gamma') - g(\gamma)$, as the graphs are of bounded valency it is a bounded (and injective) operator from $\ell^p(G)$ into $\ell^p(E)$. Its kernel in $\kk^G$ is the space of constant functions. It is worthwhile to observe that the gradient is made of $ \{(\lambda_s - \Id) g\}_{s \in S}$ where $\lambda$ is the left-regular representation. As for the right-regular representation, it is a (injective) homomorphism from $G$ into $\mathrm{Aut}\big(\cay(G,S) \big)$, the automorphism group of the Cayley graph.

The Banach space of $p$-Dirichlet functions is the space of functions $f$ on $\Gver$ such that $\nabla f \in \ell^p(E)$. It will be denoted $\D^p(\Gamma)$. In order to introduce the $\D^p(\Gamma)$-norm on $\kk^\Gver$, it is necessary to choose a vertex $e$ (in a Cayley graph, it is convenient to choose the neutral element). This said $\|f\|_{\D^p(\Gamma)}^p = \|\nabla f\|_{\ell^p(E)}^p + |f(e)|^p$. By taking the primitive of the gradients, one may also prefer to define the reduced $\ell^p$-cohomology as 
\[
\ssl{\ell^p H}^1(\Gamma):= \D^p(\Gamma) / \srl{ \ell^p(\Gver) + \kk}^{\D^p}. 
\]

Lastly, $p'$ will denote the H\"older conjugate exponent of $p$, \ie $p' = p/(p-1)$ (with the usual convention that $1$ and $\infty$ are conjugate). 

A sequence of finite sets $\{F_n\}$ in $G$ will be called a left-F{\o}lner sequence (resp. right-F{\o}lner sequence) if
\[
\forall \gamma \in G, \frac{|\gamma F_n \Delta F_n|}{|F_n|} \tosp{n} 0, \quad \textrm{ (resp. } \forall \gamma \in G, \frac{|F_n \gamma \Delta F_n|}{|F_n|} \tosp{n} 0 \textrm{ )}.
\]
A finitely generated group is amenable if and only if it has a F{\o}lner sequence. Following Tessera \cite{Tes}, the F{\o}lner sequence will be called \emph{controlled}, if the above ratio is less than $K (\diam F_n)^{-1}$ for some constant $K$. 

Existence of a left- (resp. right-) F{\o}lner sequence is equivalent to the existence of an invariant mean, \ie a positive, linear, continuous, left- (resp. right-) translation invariant map $\mu: \ell^\infty(G) \to \rr$, often normalised so that $\mu(\un_G)=1$. This, in turn, implies the existence of a sequence of finitely supported positive probability measures $\xi_n$ satisfying $\|\lambda_\gamma \xi_n - \xi_n\|_{\ell^1(G)} \to 0$ (resp. $\|\rho_\gamma \xi_n - \xi_n\|_{\ell^1(G)} \to 0$). It is a standard trick to construct a bi-invariant (\ie left- \emph{and} right-) version of any of the previous items from a left- (resp. right-) invariant one. There are many possible references on this topic, \eg Paterson's \cite{amen} or Pier's \cite{amen2} book.

Throughout this text, $\delta_\gamma$ is the Dirac mass at $\gamma$: $\delta_\gamma(\eta) = 0 $ if $\eta \neq \gamma$ and $1$ if $\eta = \gamma$. The following convention for the (left-)convolution of function will be in use:
\[
\xi * f (\eta) = \sum_{\gamma \in G} \xi(\gamma) \lambda_\gamma f(\eta) = \sum_{\gamma \in G} \xi(\gamma) f(\gamma^{-1} \eta) 
\]
Also, since this section deals exclusively with Cayley graphs of a group $G$, the vertex set $\Gver$ is equal to (and thus replaced by) $\Grou$.

\setcounter{teo}{0}
\renewcommand{\theteo}{\thesection.\arabic{teo}}

\section{A criterion for convergence in $\D^p$}\label{ss-trique}

Assume $\Gamma = \cay(G,S)$ is the Cayley graph of $G$ for the set $S$. The gradient $\nabla$ is made up of the partial gradients $\nabla_s:\kk^\Grou \to \kk^\Grou$ defined by 
\[
\nabla_s f = (\lambda_s - \Id) f = (\delta_s - \delta_e) * f. 
\]
Thus, if $f \in \D^p(\Gamma)$ then $\lambda_\gamma f$ represents the same class in $\ssl{\ell^p H}^1(\Gamma)$. Indeed, $\lambda_s f - f = \nabla_s f \in \ell^p(\Grou)$ so, writing $\gamma$ as a word in $S$, $\lambda_\gamma f$ is also in the same class. Furthermore, a convex combination of such functions will always represent the same class. Actually, if $\xi \in \kk(\Gamma)$ is of a function finite support with $\sum_{\gamma \in \Grou} \xi(\gamma) = 1$ then $[\xi * f] = [f]$.

Next recall that if a sequence $f_n \in \D^p$ converges point-wise (that is its value at $e$ and the values of the gradient on the edges) to $0$ and is bounded, then it weak$^*$-converges to $0$. This is relatively general fact. Indeed, assume $f_n \in \ell^p(\nn)$ is a bounded sequence which converges point-wise. Let $h$ be in the pre-dual, for any $\eps >0$, there is a finite set $F \subset \nn$ such that, if $h|_F$ is the restriction of $h$ to $F$, then $\|h - h|_F\| < \eps$. In particular, $|\pgen{f_n,h}| < |\pgen{f_n,h|_F}| + \|f_n\| \eps \leq |\pgen{f_n, h|_F}| + K \eps$.

Next, if $p \in ]1,\infty[$, weak$^*$-convergence of $f_n$ implies weak-convergence of $f_n$, which in turns implies norm-convergence of some sequence $h_k$ where each $h_k$ is a convex combination of the $f_n$ for $n \in \{1,2,\ldots ,k\}$ (a consequence of the Hahn-Banach theorem, see \cite[Theorem 3.13]{Rud}). Putting these two facts together one gets
\begin{lem}\label{tcondvan-l}
Let $f \in \D^p(\Gamma)$. If there exists a sequence $\xi_n$ of finitely supported functions such that:
\begin{enumerate}\renewcommand{\labelenumi}{{\normalfont \bf \arabic{enumi}.}}
\item $\sum_{\gamma \in \Grou} \xi_n = 1$;
\item $\forall s \in S, (\delta_s - \delta_e) * \xi_n * f$ converge point-wise to $0$; 
\item and $\exists K>0$ such that $\forall s \in S, \| (\delta_s - \delta_e) * \xi_n * f\|_{\ell^p(\Grou)} <K$,
\end{enumerate}
then $[f] = 0 \in \ssl{\ell^p H}^1(\Gamma)$.
\end{lem}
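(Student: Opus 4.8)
\emph{Proof proposal.}\ The plan is to replace $f$ by the convolutions $f_n:=\xi_n*f$, to observe that each $f_n$ is again in $\D^p(\Gamma)$ and represents the class $[f]\in\ssl{\ell^p H}^1(\Gamma)$ while the gradients $\nabla f_n$ tend to $0$ weak$^*$ in $\ell^p(E)$, and then to invoke reflexivity of $\ell^p(E)$ together with Mazur's lemma to upgrade weak$^*$-convergence to norm-convergence of a sequence of convex combinations. By hypothesis~1 those convex combinations are again of the form $\zeta*f$ with $\zeta$ finitely supported of total mass $1$, hence still represent $[f]$; and a class that is represented by gradients tending to $0$ in $\ell^p(E)$ must be $0$.

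In detail: first, $\nabla_s f_n=(\delta_s-\delta_e)*\xi_n*f$ by associativity of convolution, and this lies in $\ell^p(\Grou)$ by hypothesis~3, so $\nabla f_n\in\ell^p(E)\cap\nabla\kk^\Grou$, i.e. $f_n\in\D^p(\Gamma)$; moreover $[f_n]=[\xi_n*f]=[f]$ by hypothesis~1 and the remark preceding the statement. Second, by hypothesis~2 the sequence $\nabla f_n$ converges to $0$ pointwise on $E$, and by hypothesis~3 it is bounded, with $\|\nabla f_n\|_{\ell^p(E)}\le|S|^{1/p}K$; as recalled just before the statement, a bounded, pointwise-null sequence in $\ell^p(E)=\big(\ell^{p'}(E)\big)^*$ converges weak$^*$ to $0$ — given $h\in\ell^{p'}(E)$ and $\eps>0$, pick a finite $F\subset E$ with $\|h-h|_F\|_{\ell^{p'}}<\eps$, so that $|\pgen{\nabla f_n,h}|\le|\pgen{\nabla f_n,h|_F}|+|S|^{1/p}K\eps$ and the first term tends to $0$ because $F$ is finite. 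Third, since $1<p<\infty$ the space $\ell^p(E)$ is reflexive, so $\nabla f_n\to0$ weakly, and Mazur's lemma \cite[Theorem 3.13]{Rud} provides finite convex combinations $\zeta_k:=\sum_j c_j^{(k)}\xi_{n_j}$ — each finitely supported with $\sum_\gamma\zeta_k(\gamma)=1$ — such that $g_k:=\sum_j c_j^{(k)}\nabla f_{n_j}=\nabla(\zeta_k*f)\to0$ in $\ell^p(E)$.

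To conclude, $[\nabla(\zeta_k*f)]=[\nabla f]$ in $\ssl{\ell^p H}^1(\Gamma)$ for every $k$, again by the remark preceding the statement (convolution by a finitely supported function of total mass $1$ does not change the class); since the coset norm of the quotient $\big(\ell^p(E)\cap\nabla\kk^\Grou\big)/\srl{\nabla\ell^p(\Grou)}^{\ell^p(E)}$ is dominated by $\|\cdot\|_{\ell^p(E)}$, we get $\|[\nabla f]\|=\|[g_k]\|\le\|g_k\|_{\ell^p(E)}\to0$, whence $[\nabla f]=0$, i.e. $[f]=0$. (Equivalently, in the $\D^p$-picture: $f-\zeta_k*f=(\delta_e-\zeta_k)*f\in\ell^p(\Grou)$ because $\delta_e-\zeta_k$ is finitely supported of total mass $0$ — expand it over the $\delta_e-\delta_\gamma$ and write each $\gamma$ as a word in $S$, reducing to $\nabla_s f\in\ell^p(\Grou)$, as in the discussion opening this section — while $\zeta_k*f-(\zeta_k*f)(e)\,\un_\Grou$ differs from $\zeta_k*f$ by a constant and has $\D^p(\Gamma)$-norm $\|g_k\|_{\ell^p(E)}\to0$, so $f\in\srl{\ell^p(\Grou)+\kk}^{\D^p}$.) I expect no genuine obstacle: the argument is soft functional analysis, essentially assembled from the remarks preceding the statement. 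The points deserving care are (i) that convolving by the finitely supported $\xi_n$ keeps us inside $\D^p(\Gamma)$ \emph{and} fixes the cohomology class, and (ii) that the weak$^*\imp$weak$\imp$norm step must be run for the \emph{gradients} in the reflexive space $\ell^p(E)$ rather than for the $f_n$ in $\D^p(\Gamma)$, since the numbers $(\xi_n*f)(e)$ are not controlled by the hypotheses, only the gradients are.
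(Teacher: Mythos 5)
Your proposal is correct and follows essentially the same route as the paper's proof: convolve to get $[\xi_n*f]=[f]$, use boundedness plus pointwise convergence to get weak convergence of the gradients, and apply Mazur/Hahn--Banach to extract norm-convergent convex combinations representing the same class. The only (cosmetic) difference is that you run the final norm estimate in the edge picture $\ell^p(E)$, whereas the paper stays in $\D^p(\Gamma)$ and handles the value at $e$ by adding constants --- your parenthetical ``$\D^p$-picture'' remark reproduces exactly that step.
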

\begin{proof}
Indeed if {\bf 1} holds, then all the $\xi_n * f$ represent the same cohomology class. One may also add a constant function at every step, so that in addition to {\bf 2} these converges point-wise (at $e$). {\bf 3} implies that up to taking a convex combination, there is a sequence $y_n$ with $[y_n]=[f]$ (being convex combinations of $\xi_n * f$ and since $[\xi_n * f] = [f]$) and $y_n \to 0$ in the norm of $\D^p(\Gamma)$. This implies that the reduced class trivial. 
\end{proof}
This gives the following theorem, which may also be found in Kappos' preprint \cite{Kap} (where the conclusion extends to higher degrees if $G$ is of type $\FP_n$). 
\begin{teo}\label{tcentrinf-t}
Let $G$ be a finitely generated group with infinitely many finite conjugacy classes. Then $\ssl{\ell^p H}^1(\Gamma)=0$ for any Cayley graph $\Gamma$ of $G$. 
\end{teo}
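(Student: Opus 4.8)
The plan is to reduce everything to Lemma~\ref{tcondvan-l}: for an arbitrary $f \in \D^p(\Gamma)$, with $1<p<\infty$, I would produce one sequence $\xi_n$ of finitely supported functions with $\sum_{\gamma} \xi_n = 1$ satisfying conditions {\bf 2} and {\bf 3} of that lemma, which immediately gives $[f]=0$ and hence $\ssl{\ell^pH}^1(\Gamma)=0$. The functions $\xi_n$ will be normalised indicator functions of unions of finite conjugacy classes. Concretely: since $G$ has infinitely many finite conjugacy classes, pick pairwise distinct finite conjugacy classes $C_1, C_2, \ldots$ (distinct conjugacy classes are automatically disjoint), put $A_n = C_1 \cup \cdots \cup C_n$ and $\xi_n = |A_n|^{-1} \un_{A_n}$. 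Then $|A_n| \geq n \to \infty$ and $\sum_\gamma \xi_n = 1$, so condition {\bf 1} is immediate.

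The structural point that makes conditions {\bf 2} and {\bf 3} work is that each $\xi_n$ is a \emph{class function}. Indeed, for any $s \in S$ and any $\eta \in G$ the elements $s^{-1}\eta$ and $\eta s^{-1}$ are conjugate, so $\xi_n(s^{-1}\eta) = \xi_n(\eta s^{-1})$, which is precisely the identity $\delta_s * \xi_n = \xi_n * \delta_s$. Associativity of convolution then gives
\[
(\delta_s - \delta_e) * \xi_n * f = \xi_n * (\delta_s - \delta_e) * f = \xi_n * \nabla_s f .
\]
Since $f \in \D^p(\Gamma)$ we have $\nabla_s f \in \ell^p(\Grou)$, so Young's inequality yields $\|\xi_n * \nabla_s f\|_{\ell^p} \leq \|\xi_n\|_{\ell^1} \|\nabla_s f\|_{\ell^p} = \|\nabla_s f\|_{\ell^p}$, giving {\bf 3} with $K = 1 + \max_{s\in S}\|\nabla_s f\|_{\ell^p}$. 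For {\bf 2} I would estimate pointwise with H\"older's inequality, using that $\gamma \mapsto \gamma^{-1}\eta$ is injective:
\[
\bigl| (\xi_n * \nabla_s f)(\eta) \bigr| = \frac{1}{|A_n|}\Bigl| \sum_{\gamma \in A_n} \nabla_s f(\gamma^{-1}\eta) \Bigr| \leq |A_n|^{-1}\,|A_n|^{1/p'}\,\|\nabla_s f\|_{\ell^p} = |A_n|^{-1/p}\,\|\nabla_s f\|_{\ell^p},
\]
which tends to $0$ because $|A_n|\to\infty$ and $p<\infty$. Lemma~\ref{tcondvan-l} then finishes the proof.

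I do not expect a genuine obstacle here; the proof is short and the only points needing care are bookkeeping ones. The crucial observation is that $\xi_n$ being a class function is exactly what converts the unwieldy left translate $\lambda_s \xi_n - \xi_n$ into the harmless $\xi_n * \nabla_s f$, for which Young and H\"older give {\bf 3} and {\bf 2} effortlessly; note in particular that amenability of $G$ is never used, as the supports $A_n$ only need to grow, not to be F{\o}lner. (The argument uses $p<\infty$ in the H\"older estimate, and Lemma~\ref{tcondvan-l} itself requires $1<p<\infty$, so the statement is to be read in that range.)
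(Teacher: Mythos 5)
Your proof is correct and follows essentially the same route as the paper: reduce to Lemma~\ref{tcondvan-l} with normalised indicators built from finite conjugacy classes, and exploit that these are class functions so that $(\delta_s-\delta_e)*\xi_n*f=\xi_n*(\delta_s-\delta_e)*f$, whence Young's inequality gives condition \textbf{3} with a uniform bound. The only (harmless) variation is that you take growing unions $A_n$ and derive the pointwise decay in \textbf{2} from H\"older and $|A_n|\to\infty$, whereas the paper takes single classes whose distance to the identity tends to infinity and uses decay at infinity of $\ell^p$ functions; both work.
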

\begin{proof}
Take a sequence of $C_n \subset G$ of finite conjugacy class such that the distance to the identity tends to infinity and let $\xi_n = \un_{C_n}/|C_n|$ be the normalised characteristic function. Then, the condition of the above lemma are satisfied for any $f \in \D^p(\Gamma)$ as 
\begin{enumerate}\renewcommand{\labelenumi}{{\normalfont \bf \arabic{enumi}.}}
\item $\xi_n$ is positive and has $\|\xi_n\|_{\ell^1(G)} =1$;
\item Functions in $\ell^p(E)$ decay at infinity. Since  $(\delta_s -\delta_e) * \xi_n * f = \xi_n * (\delta_s -\delta_e) * f$ the point-wise convergence to $0$ follows;
\item Again $(\delta_s -\delta_e)* \xi_n * f = \xi_n * (\delta_s -\delta_e)*  f$ and since $\|\xi_n\|_{\ell^1(G)} = 1$ it has norm $1$ as a convolution on $\ell^p$, thus lemma \ref{tcondvan-l} may be applied with $K=1$.
\end{enumerate}
Since the above holds for any $f \in \D^p$, the $\ell^p$-cohomology ($1<p<\infty$) of groups with infinitely many finite conjugacy classes is trivial.
\end{proof}
As noted by Kappos in \cite{Kap}, the previous theorem covers nilpotent groups. For more amusing examples of groups with this property, see \cite{BCD}.

By a lemma of Holopainen \& Soardi \cite[Lemma 4.4]{HS} (and the equivalence between non-existence of non-constant $p$-harmonic maps and vanishing of the reduced $\ell^p$-cohomology in degree one due to Puls \cite[\S{}3]{Puls-harm}, see also Martin \& Valette \cite[\S{}3]{MV}), it is actually sufficient to show the vanishing of cohomology on bounded functions, \ie for $f \in \BD^p(\Gamma) := \ell^\infty(\Grou) \cap \D^p(\Gamma)$. This is quite useful, as one is tempted to take $\xi_n$ to be a sequence which tends to an invariant mean. 

\begin{lem}\label{tconvoper-l}
Let $G$ be an amenable group, $\Gamma= \cay(G,S)$ and $\xi_n$ is a sequence of measures tending to a left-invariant mean. If there is a $K>0$ such that 
\[
\forall s \in S, \quad \| (\delta_s - \delta_e) * \xi_n * \cdot \|_{\D^p(\Gamma) \to \ell^p(G)} \leq K
\]
then $\ssl{\ell^pH}^1(\Gamma)$=0.
\end{lem}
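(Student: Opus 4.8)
The plan is to combine the reduction to bounded Dirichlet functions (via the Holopainen--Soardi lemma) with Lemma~\ref{tcondvan-l}. So let $f \in \BD^p(\Gamma)$; it suffices to show $[f]=0$. The natural candidate for the sequence in Lemma~\ref{tcondvan-l} is $\xi_n * f$, using the very same $\xi_n$ that tend to a left-invariant mean. Condition~\textbf{1} is immediate, since $\xi_n$ being (asymptotically) a probability measure means $\sum_\gamma \xi_n(\gamma) \to 1$ (and one may renormalise so it is exactly $1$, or simply carry the $o(1)$ error through). Condition~\textbf{3} is precisely the hypothesis: the operator norm bound $\|(\delta_s - \delta_e) * \xi_n * \cdot\|_{\D^p \to \ell^p} \leq K$ gives $\|(\delta_s-\delta_e)*\xi_n*f\|_{\ell^p} \leq K \|f\|_{\D^p}$, which is a constant independent of $n$. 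So the whole burden falls on condition~\textbf{2}: point-wise convergence to $0$ of $(\delta_s - \delta_e) * \xi_n * f$ at every fixed vertex.

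For condition~\textbf{2}, I would fix a vertex $\eta$ and analyse $\big((\delta_s - \delta_e) * \xi_n * f\big)(\eta)$. Writing $g = (\delta_s - \delta_e) * f = \nabla_s f$, we have $g \in \ell^p(G)$ since $f \in \D^p$, and the quantity in question is $(\xi_n * g)(\eta) = \sum_\gamma \xi_n(\gamma)\, g(\gamma^{-1}\eta) = \langle \xi_n^\vee, \lambda_{\eta^{-1}}^{\phantom{1}}\! g'\rangle$ for an appropriate translate $g'$ of $g$ — the point being that $g \in \ell^p(G) \subset c_0(G)$, so $\lambda_\bullet g$ (suitably translated) defines a bounded function vanishing at infinity, hence lies in $\ell^\infty(G)$ and is in fact in the space $C_0(G)$ on which a left-invariant mean, tested against $\xi_n$, behaves well. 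Because $\xi_n$ converges to a \emph{left}-invariant mean, and a left-invariant mean annihilates $c_0(G)$ (a left-invariant mean of a function vanishing at infinity is $0$), one gets $\langle \xi_n, h\rangle \to 0$ for every $h \in c_0(G)$ — this uses that $\|\lambda_\gamma \xi_n - \xi_n\|_1 \to 0$ together with the $\ell^1$-density/approximation argument: any $h\in c_0$ is approximated in sup-norm by finitely supported functions, and against finitely supported $h$ one checks $\langle \xi_n, h\rangle \to 0$ directly from near-invariance (or one invokes that cluster points of $\xi_n$ in the bidual are left-invariant means, which kill $c_0$). Applying this with $h$ the relevant translate of $\nabla_s f$ yields condition~\textbf{2}.

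There is a subtlety I should be careful about: convolution on the correct side. The operator $(\delta_s - \delta_e) * \xi_n * \cdot$ acts by left convolution, and since left convolutions by different elements commute, $(\delta_s - \delta_e) * \xi_n * f = \xi_n * (\delta_s - \delta_e) * f = \xi_n * \nabla_s f$, which is exactly what makes the factorisation above legitimate; the $\xi_n$ appearing in the definition of transport amenability are related to these by the $\vee$ operation, but for the present lemma only the left-invariance and the operator-norm bound are needed, so this bookkeeping stays light. The main obstacle is really assembling the $c_0$-annihilation argument cleanly: one must make sure that the convergence $\xi_n \to$ (left-invariant mean) is being used in the sense of $\|\lambda_\gamma \xi_n - \xi_n\|_{\ell^1} \to 0$ rather than some weaker weak$^*$ sense, and that $\nabla_s f \in \ell^p \subset c_0$ genuinely gives the point-wise decay needed — but this is the same mechanism already used in the proof of Theorem~\ref{tcentrinf-t} (``functions in $\ell^p(E)$ decay at infinity''), so once the factorisation is in place the conclusion follows from Lemma~\ref{tcondvan-l} applied to every $f \in \BD^p(\Gamma)$, and then from Holopainen--Soardi plus Puls' equivalence to deduce $\ssl{\ell^pH}^1(\Gamma)=0$ in general.
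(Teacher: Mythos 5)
Your overall architecture matches the paper's: reduce to $f \in \BD^p(\Gamma)$ via Holopainen--Soardi and Puls, then verify conditions \textbf{1}--\textbf{3} of Lemma~\ref{tcondvan-l}; your treatment of \textbf{1} and \textbf{3} is exactly the paper's. But your argument for condition \textbf{2} rests on the identity $(\delta_s - \delta_e) * \xi_n * f = \xi_n * (\delta_s - \delta_e) * f$, justified by the claim that ``left convolutions by different elements commute''. That claim is false: composing left convolutions gives $L_a L_b = L_{a*b}$, so $L_{\delta_s-\delta_e}$ and $L_{\xi_n}$ commute only when $(\delta_s-\delta_e)*\xi_n = \xi_n*(\delta_s-\delta_e)$, i.e.\ when $\xi_n$ is central in the group algebra or $G$ is Abelian. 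That is precisely why the commutation is available in the proof of Theorem~\ref{tcentrinf-t}, where $\xi_n$ is the normalised characteristic function of a conjugacy class --- but in the present lemma $\xi_n$ is an arbitrary sequence tending to a left-invariant mean and the identity fails. Concretely, $(\delta_s-\delta_e)*\xi_n*f(\eta) = \sum_\gamma \xi_n(\gamma)\bigl(f(\gamma^{-1}s^{-1}\eta) - f(\gamma^{-1}\eta)\bigr)$ whereas $\xi_n*\nabla_s f(\eta) = \sum_\gamma \xi_n(\gamma)\bigl(f(s^{-1}\gamma^{-1}\eta) - f(\gamma^{-1}\eta)\bigr)$; the points $\gamma^{-1}\eta$ and $\gamma^{-1}s^{-1}\eta$ are at distance $|\gamma^{-1}s^{-1}\gamma|$, which is unbounded in $\gamma$ --- this unboundedness is the entire reason the transport-cost machinery is needed later, so it cannot be conjured away here.

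A telltale sign is that your condition-\textbf{2} argument never uses $\|f\|_{\ell^\infty} < \infty$; if it were correct, the Holopainen--Soardi reduction would be superfluous and condition \textbf{2} would hold for every $f \in \D^p$. The correct (and shorter) route, which is the paper's, groups the convolution the other way: $(\delta_s-\delta_e)*\xi_n = \lambda_s\xi_n - \xi_n$, hence
\[
|(\delta_s - \delta_e) * \xi_n * f(\eta)| \le \|\lambda_s \xi_n - \xi_n\|_{\ell^1(G)} \, \|f\|_{\ell^\infty(G)} \longrightarrow 0,
\]
using the almost-invariance $\|\lambda_s\xi_n - \xi_n\|_{\ell^1(G)} \to 0$ together with the boundedness of $f$ --- which is exactly where the restriction to $\BD^p(\Gamma)$ is consumed. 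With that substitution your proof closes; the $c_0$-annihilation argument you assembled is not needed for this lemma.
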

\begin{proof}
Indeed, assume $f \in \BD^p(\Gamma)$, then the first two points of lemma \ref{tcondvan-l} are automatically satisfied. Indeed, \textbf{1} is a consequence that $\xi_n$ is a probability measure and \textbf{2} follows from
\[
|(\delta_s - \delta_e) * \xi_n * f (\eta)| \leq \|\lambda_s \xi_n - \xi_n \|_{\ell^1(\Grou)} \|f\|_{\ell^\infty(\Grou)},
\]
as $\|\lambda_s \xi_n - \xi_n \|_{\ell^1(\Grou)} \to 0$. Finally, the hypothesis is a restatement of \textbf{3}. 
\end{proof}
In general, a computation gives:
\[ 
\begin{array}{rl}
(\delta_s - \delta_e) * \xi_n * f (\eta) 
&= \displaystyle \sum_{\gamma \in s X_n} \xi_n(s^{-1} \gamma) \lambda_\gamma f(\eta) - \displaystyle \sum_{\gamma \in X_n} \xi_n(\gamma) \lambda_\gamma f(\eta)  \\
&= \displaystyle \sum_{\gamma \in X_n} \xi_n(\gamma) f(\gamma^{-1} s^{-1} \eta) - \displaystyle \sum_{\gamma \in X_n} \xi_n(\gamma) f(\gamma^{-1} \eta),
\end{array}
\]
where $X_n$ is the support of $\xi_n$. In order to express the above sum in terms of the gradient, it is necessary to express the difference as integrating the gradient on the paths. In the simpler case where $\xi_n = \un_{F_n} / |F_n|$ and $F_n$ is a left-F{\o}lner sequence, the above sum is a the sum of values of $f$ on $F_n^{-1} s^{-1} \eta \setminus F_n^{-1} \eta$ minus the values of $f$ on $ F_n^{-1} \eta \setminus F_n^{-1} s^{-1} \eta$, all this divided by $|F_n|$. So, from now on, the aim will be to bound the transport of a uniform mass on $F_n^{-1} s^{-1} \eta \setminus F_n^{-1} \eta$ towards a uniform mass on $ F_n^{-1} \eta \setminus F_n^{-1} s^{-1} \eta$ if the cost of transport is given by the gradient of $f$.

\section{Proof of the theorem \ref{tvantran-t}}\label{ss-preuve}

The previous computation is an incentive to look at how one transports a probability measure to another.
\begin{dfn}
Let $\xi$ and $\phi$ be two finitely supported functions with \linebreak $\sum_{\gamma \in \Grou} \xi(\gamma) = \sum_{\gamma \in \Grou} \phi(\gamma)$. Fix a ``price'' function $f':E \to \kk$ on the edges (not necessarily positive). Let $\wp$ be the set of (finite) paths in the graph. 
Let $M_\wp(\xi,\phi)$ be the collection of (finitely supported) measures on $\wp$ such that the marginals are $\xi$ and $\phi$, \ie
\[
\sum_{P \in \wp, \textrm{source}(P)=x} m(P) = \xi(x) \qquad \textrm{and} \qquad \sum_{P \in \wp, \textrm{target}(P)=y} m(P) = \phi(y).
\]
Let $m \in M_\wp(\xi,\phi)$, 
$m$ is a transportation pattern or, for short, a pattern. Define the ``flattened'' measure $m^\flat$ on edges by the quantity of mass transported through that edge, \ie
\[
a \in E, \quad m^\flat(a) = \sum_{P \ni a} m(P),
\]
where the sum is to be made with multiplicities (if $a$ appears twice in $P$). The transportation cost of the measure $\xi$ to the measure $\phi$ according to the pattern $m$ and price $f'$ is
\[
\trc_{f',m}(\xi,\phi) =  \int_{E}  f'(a) \dd m^\flat(a).
\]
The infimal transportation cost of the measure $\xi$ to the measure $\phi$ is 
\[
\begin{array}{rll}
\trc(\xi,\phi) 
& := \displaystyle \inf_{m \in M_\wp(\xi,\phi)} \trc_{\un,m}(\xi,\phi) 
& = \displaystyle \inf_{m \in M_\wp(\xi,\phi)} \displaystyle  \int_{E} 1 \dd m^\flat(a) \\
& = \displaystyle \inf_{m' \in M_\Grou(\xi,\phi) } \displaystyle \int_{\Grou \times \Grou} d_\Gamma(x,y) \dd m'(x,y).
\end{array}
\]
where $\un$ is the function with constant value $1$, $M_\Grou(\xi,\phi)$ is the space of finitely supported measures on $\Grou \times \Grou$ with (usual) marginals $\xi$ and $\phi$ and $d_\Gamma(x,y)$ is the graph distance. \hfill $\Diamond$
\end{dfn}
Assume without loss of generality that we are dealing with probability measures. Notice that if $f' = \nabla f$, then for any pattern $m$,
\[
\begin{array}{rl}
\trc_{\nabla f,m}(\xi,\phi) 
  &= \displaystyle \int_{E} \nabla f(a) \dd m^\flat(a) \\
  &= \displaystyle \int_{\wp}  \sum_{a \in P} \nabla f(a) \dd m(P) \\
  &= \displaystyle \int_{\wp}  ( f(\text{target}(P)) - f(\text{source}(P)) ) \dd m(P) \\
  &= \displaystyle \int_{\Grou} f(y) \dd \phi(y) - \int_{\Grou} f(x) \dd \xi(x). \\
\end{array}
\]
Using this language one has that, for any pattern $m$, 
\[
(\delta_s - \delta_e) * \xi_n * f (\eta) = \trc_{\rho_\eta \nabla f,m}\big( (\lambda_s \xi_n)^\vee, \xi_n^\vee \big)
\]
where $\rho_\eta$ is the right regular representation and $\xi^\vee(\gamma) = \xi(\gamma^{-1})$ (so $(\lambda_s \xi)^\vee = \rho_s \xi^\vee$). 

\begin{proof}[Proof of theorem \ref{tvantran-t}]
The goal is to obtain the bound in lemma \ref{tconvoper-l}. By hypothesis, for every $n$, there is a pattern $m_n$ such that $\trc_{\un,m_n}(\rho_s \xi_n^\vee, \xi_n^\vee) \leq 2K$. For each $n$, this pattern will be used to get a bound on the map $T$ which sends a function $f' \in \ell^p(E)$ to $(Tf')(\eta) = \trc_{\rho_\eta f',m_n}\big( (\lambda_s \xi_n)^\vee, \xi_n^\vee \big)$ (which will be in $\ell^p(G)$). The bound will be obtained for $p=1$ and $p=\infty$, and deduced by Riesz-Thorin interpolation for intermediate $p$. In an attempt to alleviate notations the index $n$ will be omitted.

To get a $\ell^\infty$ bound, it suffices, to look at $\trc_{\un,m}(\rho_s \xi^\vee, \xi^\vee)$: right multiplication is an automorphism and replacing $f'$ by $\| f'\|_{\ell^\infty(E)}$ on each edges may only increase these sums. The $\ell^1$-bound is in appearance more intricate as one should not dispose so carelessly of the values of $f'$. Indeed,  if the weight function on the edges is $f' \in \ell^1(E)$ then the graph can be seen as a bounded (but not compact) metric space (\emph{via} $|f'|$).

The question can then be reduced to determining whether it is possible to solve (with $\xi = \rho_s \xi_n^\vee$ and $\phi = \xi_n^\vee$) the transport problem (and all its translates) so that $\forall a \in E, \sum_{\eta \in \Grou} m^\flat_{\rho_\eta f'; \xi,\phi}(a) < K \| f' \|_{\ell^1(E)}$ for any $f' \in \ell^1(E)$. This seems quite daunting. However, this bound is automatic if one has that $m^\flat_{\un; \xi , \phi} (E) \leq K$. Indeed, if one knows that the total quantity of mass is transported over all edges is bounded in one transport, then in all translations, any given edge will be used at most $|S|$ times this amount (the factor $|S|$ is necessary as the action of $G$ on non-oriented edges is not always faithful). In other words,
\[
\sum_{\eta \in \Grou} m^\flat_{\rho_\eta f'; \xi,\phi}(a) \leq |S| \trc_{\un} (\xi,\phi)  \| f' \|_{\ell^1(E)}. 
\]
Thus, using the Riesz-Thorin theorem, one gets a uniform bound on $\| (\delta_s - \delta_e) * \xi_n * \cdot \|_{\D^p(\Gamma) \to \ell^p(G)}$ for all $p \in [1,\infty]$ and, using lemma \ref{tconvoper-l}, shows that the only class in reduced cohomology is the trivial one. 
\end{proof}

Note that using $\xi_n = \un_{F_n} / |F_n|$ in theorem \ref{tvantran-t} (here $F_n$ are F{\o}lner sets), one sees immediately vanishing of cohomology for Abelian groups: there is a bijection between $F_n^{-1} s = s F_n^{-1}$ and $F_n^{-1}$ which sends every vertex to a neighbour using only one edge, so the above may be applied with $K=1$.

The hypothesis of theorem \ref{tvantran-t} are always satisfied when $G$ has a controlled F{\o}lner sequence, \ie a sequence of finite sets such that $\forall s \in S,$
\[
\frac{|F_n \Delta s F_n|}{|F_n|} \leq \frac{K}{ \diam F_n }.
\]
Since $F_n$ may be translated to be contained in the ball of radius $\diam F_n$. This gives back a special case (when the $L^p$-representation is the left-regular representation) of a result of Tessera \cite[Theorem 2.2]{Tes}:
\begin{teo}\label{tconfol-t}
If $G$ is a finitely generated group with a controlled F{\o}lner sequence, then its degree one reduced $\ell^p$-cohomology is trivial.
\end{teo}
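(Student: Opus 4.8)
The plan is to verify that a finitely generated group with a controlled F{\o}lner sequence meets the definition of transport amenability given above, and then to apply Theorem \ref{tvantran-t}. Fix a finite symmetric generating set $S$ and a controlled F{\o}lner sequence $\{F_n\}$, so $|sF_n\Delta F_n|\leq K|F_n|/\diam F_n$ for every $s\in S$. Replacing $F_n$ by a left-translate $\gamma F_n$ changes neither $|F_n|$ nor the numbers $|sF_n\Delta F_n|$, so I would first translate so that $e\in F_n$; then $F_n$ lies in the ball $B(e,R_n)$ of radius $R_n:=\diam F_n$ about $e$. (If $G$ is finite the conclusion is trivial, so one may assume $G$ infinite, whence $|F_n|\to\infty$ and $R_n\to\infty$.) Set $\xi_n:=\un_{F_n}/|F_n|$; this is a finitely supported probability measure, and $\|\lambda_\gamma\xi_n-\xi_n\|_{\ell^1(G)}=|\gamma F_n\Delta F_n|/|F_n|\tosp{n}0$, so $\xi_n$ tends to a left-invariant mean.

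The main point is the transport bound. Since $\rho_s\xi_n^\vee=(\lambda_s\xi_n)^\vee$ and $\lambda_s\xi_n=\un_{sF_n}/|F_n|$, the two measures $\rho_s\xi_n^\vee=\un_{(sF_n)^{-1}}/|F_n|$ and $\xi_n^\vee=\un_{F_n^{-1}}/|F_n|$ are uniform on sets of size $|F_n|$, with $(sF_n)^{-1}=F_n^{-1}s^{-1}$. Take the transportation pattern that keeps fixed the mass on $(sF_n)^{-1}\cap F_n^{-1}$ and matches, by any bijection, the mass on $(sF_n)^{-1}\setminus F_n^{-1}=(sF_n\setminus F_n)^{-1}$ with the mass on $F_n^{-1}\setminus(sF_n)^{-1}$ (these two leftover sets have the same cardinality). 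Its cost is at most
\[
\frac{|sF_n\setminus F_n|}{|F_n|}\cdot\diam B(e,R_n+1)\;\leq\;\frac{K}{R_n}\cdot 2(R_n+1)\;\leq\;4K,
\]
using $|sF_n\setminus F_n|\leq|sF_n\Delta F_n|\leq K|F_n|/R_n$ together with the fact that, $S$ being symmetric, $F_n^{-1}\subseteq B(e,R_n)$ and $(sF_n)^{-1}=F_n^{-1}s^{-1}\subseteq B(e,R_n+1)$, so all the mass that is moved goes between points of $B(e,R_n+1)$. Hence $\trc(\rho_s\xi_n^\vee,\xi_n^\vee)\leq 4K$ for all $n$ and all $s\in S$, the group $G$ is transport amenable, and Theorem \ref{tvantran-t} yields $\ssl{\ell^p H}^1(\Gamma)=0$ for every $p\in\, ]1,\infty[$.

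I expect the only genuinely delicate point to be the bookkeeping: keeping track of the passage to $\xi^\vee$ and of the interplay between left and right translations, and checking that after pushing $F_n$ into a ball the displaced sets $(sF_n)^{-1}$ and $F_n^{-1}$ still sit in one ball of radius $O(\diam F_n)$. Once that is arranged the estimate is just the slogan ``a vanishing fraction of the mass travels a distance $O(\diam F_n)$'', which is exactly the mechanism behind the Abelian example recorded after Theorem \ref{tvantran-t}; the remaining analytic content --- the $\ell^1$ and $\ell^\infty$ bounds and the Riesz--Thorin interpolation --- is already packaged inside that theorem (via Lemma \ref{tconvoper-l}).
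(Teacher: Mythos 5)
Your overall strategy is exactly the paper's: take $\xi_n=\un_{F_n}/|F_n|$, push $F_n$ into a ball of radius $\diam F_n$ about the identity, and solve the transport problem by an arbitrary bijection between the two leftover pieces of the symmetric difference, so that a mass fraction $\leq K/\diam F_n$ travels a distance $\leq 2\diam F_n+2$. The paper's proof is precisely this one-line estimate, so in substance you have reproduced it.

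There is, however, one concrete false step: you assert that replacing $F_n$ by a \emph{left}-translate $\gamma F_n$ does not change the numbers $|sF_n\Delta F_n|$. It does: $s\gamma F_n\Delta\gamma F_n=\gamma\bigl((\gamma^{-1}s\gamma)F_n\Delta F_n\bigr)$, so the ratio you control after translating is the F{\o}lner ratio of the \emph{conjugate} $\gamma^{-1}s\gamma$, not of $s$; since the translating element $\gamma=\gamma_n$ must depend on $n$, there is no uniform control and the controlled-F{\o}lner estimate is lost (in an Abelian group this is invisible, which is presumably why it slipped by). The repair is to translate on the \emph{right}: with the paper's convention that edges of $\cay(G,S)$ join $\gamma$ to $s^{-1}\gamma$, the word metric is right-invariant, so $F_n\mapsto F_n\gamma_n$ (with $\gamma_n=x_0^{-1}$ for some $x_0\in F_n$) preserves $\diam F_n$ and puts $e$ in $F_n$, while $sF_n\gamma_n\Delta F_n\gamma_n=(sF_n\Delta F_n)\gamma_n$ leaves every F{\o}lner ratio literally unchanged. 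After this substitution the rest of your computation, the passage to $\xi_n^\vee$ and $\rho_s\xi_n^\vee=\un_{F_n^{-1}s^{-1}}/|F_n|$ included, goes through verbatim and gives $\trc(\rho_s\xi_n^\vee,\xi_n^\vee)\leq 4K$, which is the bound $(2\diam F_n+2)\,|sF_n\Delta F_n|/|F_n|$ appearing in the paper.
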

\begin{proof}
If $\xi_n = \un_{F_n} /|F_n|$ one sees, up to translating $F_n$ into a ball of radius $\diam F_n$ around $e_G$, that the transport problem may be solved by picking any bijection from $F_n^{-1} s \setminus F_n^{-1}$ to $F_n^{-1} \setminus F_n^{-1} s$ and looking at the corresponding transport cost (at most $(2 \diam F_n + 2) |s F_n \Delta F_n| / |F_n|$).
\end{proof}

\section{Examples}\label{ss-exemple}

However, there are examples of groups which are not covered by Theorem \ref{tconfol-t} but where Theorem \ref{tvantran-t} applies. This is the case for many wreath products. The conventions for the wreath product (of two finitely generated groups) $H' \wr H$ will be as follows. $H'$ is the ``lamp state'' group and $H$ is the ``lamplighter position'' group. The generators are going to be of the form:  $S= \{e_H \to S_{H'}\} \times \{e_H \} \cup \{0\} \times S_H$ where $S_{H'}$ and $S_H$ are some set of generators for $H'$ and $H$ (respectively), $\{e_H \to S_{H'} \}$ is the set of functions supported on $e_H$ with value in $S_{H'}$ and $\{0\}$ is the trivial function (\ie sending all $H$ to $e_{H'}$). 

Multiplication on the left by $(e_H \to s_{H'}, e_H )$ changes the lamp state where the lamplighter is, whereas multiplication on the right changes the lamp state at the position $e_H$. Multiplication on the left by $(0, s_H)$ moves the lamplighter, whereas multiplication on the right moves the lamplighter together with all the lamps. 

\begin{exa}\label{ex-reverb}
Let $\Gamma$ be the Cayley graph of $H' \wr H$ where $H'$ is finite and $H$ is Abelian. 

A F{\o}lner sequence is given as follows: let $A_n$ be a controlled F{\o}lner sequence of diameter $d_n$ in $H$ and let $B_k$ be the functions $H \to H'$ which are supported on $A_k$ (\ie all the non-trivial lamp states supported by $A_k$). Then $F_n := B_n \times A_n$ is a (left- but not right-) F{\o}lner sequence. The transport problem is particularly easy to solve for $\xi_n = \un_{F_n} / |F_n|$, the normalised characteristic function of the F{\o}lner sets. 

Indeed, $r \in \{e_H \to S_{H'} \} \times \{e_H\}$ does not displace the F{\o}lner set at all. If $z \in \{0\} \times S_H$, then the problem is to displace elements from $F_n^{-1} z$ to $F_n^{-1}$ with a cost which is bounded by a constant multiple of $|F_n|$. 

Since the $F_n$ are particularly easy to describe and the $F_n^{-1}$ more ungainly, it turns out to be easier to write down the problem by inverting everything: namely displace $z^{-1} F_n$ to $F_n$, where the cost of displacing $\gamma$ to $\gamma'$ is the length of the element $\eta$ such that $\gamma \eta = \gamma'$.

Elements in  $z^{-1} F_n \setminus F_n$ are those where the lamplighter moved out of $A_n$. Since $H$ is assumed Abelian, for elements where the lamp states are all off on $A_n \setminus z^{-1} A_n$ it is possible to  displace all these element back to elements in $F_n$ by multiplying on the right by $z$. This will contribute to the final total cost (after division by $|F_n|$) for less than $\dfrac{|A_n \setminus z^{-1} A_n| }{ |A_n| c^{|A_n \setminus z^{-1}A_n|}}$.

The aim is now to send elements with some non-trivial lamp states on $\Delta_z A_n := z A_n \setminus A_n$ to similar elements with non-trivial lamp states on $\Delta'_z A_n := A_n \setminus z A_n $. Let $\phi: A_n z \setminus A_n \to A_n \setminus A_n z$ be a bijection. Proceed as follows:
\begin{itemize}
\item If there are $i$ non-trivial lamp states, move the lamplighter (and the lamps, since the multiplication is on the right) so that $i$ lamps are successively at the identity and change their states to the trivial one, then turn on the $i$ corresponding (by $\phi$) lamps before coming back to a neighbouring position; this takes at most $2i(d_n+k)+d_n$ steps as $A_n$ is of diameter $d_n$ and the diameter of the ``lamp states group'' $F$ is $k$; 
\item For $i$ fixed, the number of such elements is $c^{|A_n|-|\Delta_z A_n|} (c-1)^i \binom{|\Delta_z A_n|}{i}$ where $c=|F|$;
\item So the total of used edges is at most (recall $\sum_{i=1}^{N} (c-1)^i i \binom{N}{i} = N c^{N-1}$)
\[
\begin{array}{l}
c^{|A_n|-|\Delta_z A_n|} \displaystyle \sum_{i=0}^{|\Delta_z A_n|} (c-1)^i \binom{|\Delta_z A_n|}{i} \big( 2i(d_n+k) + d_n ) \\
\qquad \leq c^{|A_n|-1} \bigg( 2 (d_n+k) |\Delta_z A_n| + d_n c \bigg);
\end{array}
\]
\item As $|F_n| = c^{|A_n|} |A_n|$, one gets a bound of $\dfrac{2 |\Delta_z A_n| (d_n + k) + d_n c}{|A_n| c}$.
\end{itemize}
Since $A_n$ may be chosen to be a controlled F{\o}lner sequence ($H$ is assumed Abelian), $d_n / |A_n| \leq 1$ and $k/c \leq 1$ is fixed, this is uniformly bounded by $1+\eps+2K/c$ where $\eps > 0$ is arbitrarily small and $K$ comes from the controlled F{\o}lner ratio. (Actually, if $H \neq \zz$, then the constant is $\eps+2K/c$.) Hence, theorem \ref{tvantran-t} applies.

The sequence $F_n$ is not itself a controlled F{\o}lner sequence (unless $H = \zz$): $\diam F_n \geq |A_n|$, $|F_n| = |A_n| k^{|A_n|}$ and $|z F_n \Delta F_n| \geq |\Delta_z A_n| k^{|A_n|}$. \hfill $\Diamond$
\end{exa}

The strategy may very well extend to more general wreath products. Considering $\xi_n = \un_{F_n} *\un_{F_n^{-1}} / |F_n|^2$ to get some sort of bi-CF sequence might also be a good idea. The parameters $k=\diam H'$ and $c=|H'|$ have been carefully brought along in the above example, the reason being:

\begin{exa}\label{ex-reverb2}
Let $H'$ and $H$ be infinite amenable groups with respective F{\o}lner sequences $A_n'$ and $A_n$ and respective generating sets $S_{H'}$ and $S_H$. Let $d_n=\diam A_n$ and $k_i=\diam A'_i$. Assume $H$ is Abelian and and that there is a sequence $i_n \in \nn$ such that 
\begin{itemize}
\item $\forall z \in S_{H'},  \limm{n \to \infty} \dfrac{2(d_n+k_{i_n}) |\Delta_z A'_{i_n}|}{|A_n| \, |A'_{i_n}|} \leq 2$.
\item $\forall z \in S_{H},  \limm{n \to \infty} \dfrac{2(d_n+k_{i_n}) |\Delta_z A_n| + d_n |A'_{i_n}|}{|A_n| \, |A'_{i_n}|} \leq 2$.
\end{itemize}
Then theorem \ref{tvantran-t} applies to $H' \wr H$ (for the generating set $S= \{e_H \to S_{H'}\} \times \{e_H \} \cup \{0\} \times S_H$). 

To see this repeat the preceding example with $F_n$ being functions with support in $A_n$ and taking values in $A'_{i_n}$. (In doing so, $k$ becomes $k_i=\diam A'_i$ and $c$ becomes $c_i=|A'_i|$). The first condition comes up when solving the transport problem for the generators $(\{e_H \to h'\},e_H)$ (which no longer leave the set invariant). There are $|\Delta_z A'_{i_n}|$ which escape the F{\o}lner set, and each of them may be returned by a cost of at most $2(d_n+k_{i_n})$. The second is used when solving the transport problem for the generators $(e_{H'},h)$. 

The required sequence $i_n$ always exists. The first ratio always tends to $0$ if $i_n$ grows slowly enough, and, if $A_n$ is taken to be CF, the second ratio tends to either $1$ (when $H =\zz$) or $0$ (otherwise). \hfill $\Diamond$
\end{exa}

This means one may apply theorem \ref{tvantran-t} to $H' \wr \zz^k$ for any $H'$ amenable and $k >0$.

\end{document}